\numberwithin{equation}{section}
\theoremstyle{plain}
\newtheorem{theorem}{Theorem}
\newtheorem{corollary}[theorem]{Corollary}
\newtheorem{example}[theorem]{Example}
\newtheorem{lemma}[theorem]{Lemma}
\newtheorem{remark}[theorem]{Remark}
\newtheorem{assumption}[theorem]{Assumption}
\newtheorem*{theorem*}{Theorem}
\begin{document}

\title[Moderate and Large Deviations for the Erd\H{o}s-Kac Theorem]{Moderate and Large Deviations for the Erd\H{o}s-Kac Theorem}

\author{Behzad Mehrdad}
\address
{Courant Institute of Mathematical Sciences\newline
\indent New York University\newline
\indent 251 Mercer Street\newline
\indent New York, NY-10012\newline
\indent United States of America}
\email{mehrdad@cims.nyu.edu}

\author{Lingjiong Zhu}
\address
{School of Mathematics\newline
\indent University of Minnesota-Twin Cities\newline
\indent 206 Church Street S.E.\newline
\indent Minneapolis, MN-55455\newline
\indent United States of America}
\email{ling@cims.nyu.edu}

\date{16 May 2015. \textit{Revised:} 24 November 2015}

\subjclass[2000]{60F10,11N37.}%Large deviations, Asymptotic results on arithmetic functions.
\keywords{The Erd\H{o}s-Kac theorem, additive functions, number of distinct prime factors, large deviations, moderate deviations.}

\begin{abstract}
The Erd\H{o}s-Kac theorem is a celebrated result in number theory which says that the number of distinct prime factors
of a uniformly chosen random integer satisfies a central limit theorem. In this paper, we establish the large deviations
and moderate deviations for this problem in a very general setting for a wide class of additive functions.
\end{abstract}

\maketitle

\section{Introduction}

Let $V(n)$ be a random integer chosen uniformly
from $\{1,2,\ldots,n\}$ and let $X(n)$ be the number of the distinct primes
in the factorization of $V(n)$. A celebrated result by Erd\H{o}s and Kac \cite{ErdosII}, \cite{ErdosIII} says that
\begin{equation}
\frac{X(n)-\log\log n}{\sqrt{\log\log n}}\rightarrow N(0,1),
\end{equation}
as $n\rightarrow\infty$. This is a deep extension of Hardy-Ramanujan Theorem
(see \cite{Hardy}).

In addition, the central limit theorem holds in a more general setting for additive functions.
A formal treatment and proofs can be found in e.g. Durrett \cite{Durrett}.

In terms of rate of convergence to the Gaussian distribution, i.e. Berry-Esseen bounds,
R\'{e}nyi and Tur\'{a}n \cite{Renyi} obtained the sharp rate of convergence
$O(1/\sqrt{\log\log n})$. Their proof is based on the analytic theory of Dirichlet series.
Recently, Harper \cite{Harper} used a more probabilistic approach and used Stein's method
to get an upper bound of rate of convergence of the order $O(\log\log\log n/\sqrt{\log\log n})$.

In terms of large deviations, Radziwill \cite{Radziwill} used analytic number theory approach
to get a series of asymptotic estimates. 
F\'{e}ray et al. \cite{Feray} proved precise large deviations using the mod-Poisson convergence
method developed in Kowalski and Nikeghbali \cite{Kowalski}.

In this paper, we study the large deviation principle
and moderate deviation principle in the sense of the Erd\H{o}s and Kac.
Instead of precise deviations that have been studied in F\'{e}ray et al. \cite{Feray},
our large deviations and moderate deviations results are in the sense of Donsker-Varadhan \cite{VaradhanI}, \cite{DonskerI},
\cite{DonskerII}, \cite{DonskerIII}, \cite{DonskerIV}.
Our proofs are probabilistic and require only elementary number theory results,
in contrast to the analytical number theory approach in Radziwill \cite{Radziwill}.
Since in this paper, we are considering the Donsker-Varadhan type large deviations, 
we only obtain the leading order term for the tail estimates, in contrast to the more precise estimates
in the works by Radziwill \cite{Radziwill}
and F\'{e}ray et al. \cite{Feray}. Because of this, 
we need much weaker assumptions on the additive functions in our paper.
Both large deviations and moderate deviations in our paper 
will be proved for a much wider class of
additive functions than what have been studied in Radziwill \cite{Radziwill} and F\'{e}ray et al. \cite{Feray}.
It is also worth mentioning that the large deviations theory from probability theory has been recently applied to study other problems
in number theory, see e.g. \cite{MZ, Zhu,FangI, FangII,Hu}. 
It might have the potential to become a useful tool in analytic number theory.

Before we proceed, let us introduce the formal definition of large deviations.  
A sequence $(P_{n})_{n\in\mathbb{N}}$ of probability measures on a topological space $X$ 
satisfies the large deviation principle with speed $b_{n}$ and rate function $I:X\rightarrow\mathbb{R}\cup\{\infty\}$ 
if $I$ is non-negative, 
lower semicontinuous and for any measurable set $A$, we have
\begin{equation}
-\inf_{x\in A^{o}}I(x)\leq\liminf_{n\rightarrow\infty}\frac{1}{b_{n}}\log P_{n}(A)
\leq\limsup_{n\rightarrow\infty}\frac{1}{b_{n}}\log P_{n}(A)\leq-\inf_{x\in\overline{A}}I(x).
\end{equation}
Here, $A^{o}$ is the interior of $A$ and $\overline{A}$ is its closure. 
We refer to Dembo and Zeitouni \cite{Dembo} or Varadhan \cite{VaradhanII} for general background of large deviations and the applications.

Let $X_{1},\ldots,X_{n}$ be a sequence of $\mathbb{R}^{d}$-valued i.i.d. random vectors with mean $0$ and convariance matrix $C$
that is invertible. Assume that $\mathbb{E}[e^{\langle\theta,X_{1}\rangle}]<\infty$, for $\theta$ in some ball around
the origin. For any sequence $a_{n}$ so that $\frac{\sqrt{n}}{a_{n}}$
and $\frac{a_{n}}{n}\rightarrow 0$ as $n\rightarrow\infty$,
a moderate deviation principle says that for any Borel set $A$,
\begin{align}
-\frac{1}{2}\inf_{x\in A^{o}}\langle x,C^{-1}x\rangle
&\leq\liminf_{n\rightarrow\infty}\frac{n}{a_{n}^{2}}\log\mathbb{P}\left(\frac{1}{a_{n}}\sum_{i=1}^{n}X_{i}\in A\right)
\\
&\leq\limsup_{n\rightarrow\infty}\frac{n}{a_{n}^{2}}\log\mathbb{P}\left(\frac{1}{a_{n}}\sum_{i=1}^{n}X_{i}\in A\right)
\leq-\frac{1}{2}\inf_{x\in A^{o}}\langle x,C^{-1}x\rangle.\nonumber
\end{align}
In other words, $\mathbb{P}(\frac{1}{a_{n}}\sum_{i=1}^{n}X_{i}\in\cdot)$ satisfies a large deviation principle with the speed $\frac{a_{n}^{2}}{n}$.
The above classical result can be found for example in \cite{Dembo}.
Moderate deviation principle fills in the gap between central limit theorem and large deviation principle.

In this paper, we are interested to prove both large deviations and moderate deviations for the Erd\H{o}s-Kac theorem
for a wide class of additive functions.

\section{Main Results}

Throughout this paper, $p$ always denotes a prime number.

\begin{assumption}\label{MainAssumption}
Let $g$ be a strongly additive, i.e. $g(p^{k})=g(p)$ for all primes $p$ and integers $k\geq 1$,
and $g(mn)=g(m)+g(n)$ whenever $\text{gcd}(m,n)=1$. In addition, we assume that
there exists a probability measure $\rho$ on $\mathbb{R}$ so that
\begin{itemize}
\item
For any $\theta\in\mathbb{R}$, $\int_{\mathbb{R}}e^{\theta y}\rho(dy)<\infty$.

\item
For any $\theta\in\mathbb{R}$, $\int_{\mathbb{R}}e^{\theta y}\rho_{n}(dy)\rightarrow\int_{\mathbb{R}}e^{\theta y}\rho(dy)$,
where
\begin{equation}
\rho_{n}(A)=\frac{\sum_{g(p)\in A,p\leq n}\frac{1}{p}}{\sum_{p\leq n}\frac{1}{p}},
\end{equation}
for any Borel set $A\subset\mathbb{R}$.
\end{itemize}
\end{assumption}

Let $V(n)$ be a uniformly chosen random integer from $\{1,2,\ldots,n\}$ and $Z_{p}=1$ if $V(n)$
is divisible by $p$ and $Z_{p}=0$ otherwise. Then, for any strongly additive function $g$, we have
$g(V(n))=\sum_{p\leq n}g(p)Z_{p}$. 
Let $X(n):=g(V(n))$.
We have the following large deviations result.

\begin{theorem}\label{LDPThm}
Under Assumption \ref{MainAssumption}, $\mathbb{P}(\frac{X(n)}{\log\log n}\in\cdot)$ satisfies a large deviation
principle with speed $\log\log n$ and rate function
\begin{equation}
I(x):=\sup_{\theta\in\mathbb{R}}\left\{\theta x-\int_{\mathbb{R}}(e^{\theta y}-1)\rho(dy)\right\}.
\end{equation}
\end{theorem}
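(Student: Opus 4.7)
The plan is to apply the Gärtner-Ellis theorem to the scaled random variable $g(V(n))/\log\log n$; the stated theorem is the special case $g\equiv 1$ (so $\rho=\delta_{1}$ and $\int(e^{\theta y}-1)\rho(dy)=e^{\theta}-1$). Set
\begin{equation*}
\Lambda_{n}(\theta):=\frac{1}{\log\log n}\log\mathbb{E}\!\left[e^{\theta g(V(n))}\right];
\end{equation*}
the goal is to show $\Lambda_{n}(\theta)\to\Lambda(\theta):=\int_{\mathbb{R}}(e^{\theta y}-1)\rho(dy)$ pointwise in $\theta$. Since $\Lambda$ is finite on all of $\mathbb{R}$ by the first bullet of Assumption~\ref{MainAssumption}, convex, and smooth, it is essentially smooth, so Gärtner-Ellis yields the LDP with rate function $I=\Lambda^{\ast}$, which is precisely the one in the statement.

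For the core computation I start from the identity $e^{\theta g(p)Z_{p}}=1+(e^{\theta g(p)}-1)Z_{p}$ (valid because $Z_{p}^{2}=Z_{p}$), multiply over $p\leq n$, and take expectations using $\mathbb{E}[\prod_{p\mid m}Z_{p}]=\lfloor n/m\rfloor/n$ for squarefree $m\leq n$:
\begin{equation*}
\mathbb{E}\!\left[e^{\theta g(V(n))}\right]=\sum_{\substack{m\leq n\\ m\text{ sqfree}}}\frac{\lfloor n/m\rfloor}{n}\prod_{p\mid m}\bigl(e^{\theta g(p)}-1\bigr).
\end{equation*}
Replacing $\lfloor n/m\rfloor/n$ by $1/m$ and dropping the constraint $m\leq n$ formally produces the Euler product $\prod_{p\leq n}(1+(e^{\theta g(p)}-1)/p)$, whose logarithm, using Mertens' theorem $\sum_{p\leq n}1/p=\log\log n+O(1)$ together with an Abel-summation bound controlling the $\sum_{p}(e^{\theta g(p)}-1)^{2}/p^{2}$ remainder, equals
\begin{equation*}
\Bigl(\sum_{p\leq n}\tfrac{1}{p}\Bigr)\int_{\mathbb{R}}(e^{\theta y}-1)\,\rho_{n}(dy)+O(1)=\log\log n\cdot\int_{\mathbb{R}}(e^{\theta y}-1)\,\rho(dy)+o(\log\log n),
\end{equation*}
after invoking the second bullet of Assumption~\ref{MainAssumption}.

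To make the passage $\lfloor n/m\rfloor/n\to 1/m$ rigorous I will use a Kubilius-style truncation: decompose $g(V(n))=S_{y}+T_{y}$ with $S_{y}=\sum_{p\leq y}g(p)Z_{p}$ and $T_{y}=\sum_{y<p\leq n}g(p)Z_{p}$, choosing $y=n^{1/u(n)}$ with $u(n)\to\infty$ slowly. Any squarefree $m$ supported on primes $\leq y$ with $\omega(m)\leq u(n)$ automatically satisfies $m\leq y^{u(n)}=n$, so the Euler product for $S_{y}$ emerges without truncation on these ``typical'' configurations; atypical $m$ with more than $u(n)$ prime factors are absorbed by a Markov bound on $\omega(V(n))$. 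For $T_{y}$, the number of distinct primes of $V(n)$ exceeding $y$ is at most $\log n/\log y=u(n)$, and H\"older's inequality combined with the uniform-in-$n$ exponential moment bound of Assumption~\ref{MainAssumption} yields $\mathbb{E}[e^{\theta T_{y}}]=e^{o(\log\log n)}$, so $T_{y}$ contributes only an $o(\log\log n)$ error to the log-MGF. The principal obstacle is precisely this uniform control of the large-prime remainder, where the $Z_{p}$'s are genuinely dependent and individual $g(p)$'s may be large; once it is settled, the essential smoothness of $\Lambda$ follows from dominated convergence and Gärtner-Ellis closes the argument.
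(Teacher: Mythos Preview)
Your overall architecture---compute the log-MGF limit via a prime truncation at $y=n^{1/u(n)}$, compare the dependent $Z_{p}$'s to independent Bernoullis $Y_{p}$ with parameter $1/p$, then invoke G\"artner--Ellis---is exactly the paper's strategy.  Where you diverge is in omitting one preliminary step that the paper relies on throughout: a truncation on the \emph{values} of $g$.  The paper first proves (its Lemma~\ref{LargeC}) the superexponential estimate
\[
\limsup_{C\to\infty}\limsup_{n\to\infty}\frac{1}{\log\log n}\log\mathbb{P}\!\left(\Bigl|\sum_{|g(p)|>C}g(p)Z_{p}\Bigr|\geq\epsilon\log\log n\right)=-\infty,
\]
and from then on works only with primes satisfying $|g(p)|\leq C$.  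This is not cosmetic; it is what makes the two later comparisons go through.

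Without that truncation your sketch has two concrete gaps.  First, for the large-prime tail $T_{y}$: the deterministic bound ``at most $u(n)$ primes exceed $y$'' controls the \emph{number} of terms but not their size, and Assumption~\ref{MainAssumption} is a statement about the $1/p$-weighted empirical measure $\rho_{n}$, not about individual values $g(p)$; there is no H\"older step that converts the two into $\mathbb{E}[e^{\theta T_{y}}]=e^{o(\log\log n)}$.  The paper instead bounds $|T_{y}|\leq C\sum_{k_{n}<p\leq n}Z_{p}$ (valid only after truncating $|g(p)|\leq C$), so that all expansion coefficients become nonnegative and the comparison with $Y_{p}$ is legitimate (its Lemma~\ref{LargePrimes}).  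Second, for the main term $S_{y}$: the product expansion you write has coefficients $\prod_{p\mid m}(e^{\theta g(p)}-1)$ of mixed sign whenever $\theta g(p)$ changes sign, so the inequality $\mathbb{E}[\prod Z_{p}]\leq\prod 1/p$ does not yield a one-sided MGF bound, and a ``Markov bound on $\omega(V(n))$'' does not repair this.  The paper avoids the sign problem by comparing moments rather than the product expansion (its Lemma~\ref{ZYLemma}): it shows $|\mathbb{E}[S_{n}^{r}]-\mathbb{E}[\tilde S_{n}^{r}]|\leq (Ck_{n})^{r}/n$ and then splits the Taylor series at $r=K\log\log n$; the bound $(Ck_{n})^{r}$ visibly requires $|g(p)|\leq C$.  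Adding the value-truncation step to your outline (and splitting by the sign of $g(p)$ when proving it, as the paper does) would close both gaps and make your argument coincide with the paper's.
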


When $g(p)\rightarrow\lambda\in(0,\infty)$ as $p\rightarrow\infty$,
$\rho(dy)$ tends to a point mass at $\lambda$ and the supremum in $I(x)$
in Theorem \ref{LDPThm} is achieved at $\theta=\lambda^{-1}\log(x/\lambda)$,
which gives the following corollary.

\begin{corollary}
Assume that $g(p)\rightarrow\lambda\in(0,\infty)$ as $p\rightarrow\infty$.
$\mathbb{P}(\frac{X(n)}{\log\log n}\in\cdot)$ satisfies a large deviation
principle with speed $\log\log n$ and rate function
\begin{equation}
I(x):=
\begin{cases}
\frac{x}{\lambda}\log\frac{x}{\lambda}-\frac{x}{\lambda}+1 &\text{if $x\geq 0$},
\\
+\infty &\text{otherwise}.
\end{cases}
\end{equation}
For the special case $g(p)\equiv 1$, $X(n)$ denotes the number of distinct prime factors
of a random integer uniformly chosen from $\{1,2,\ldots,n\}$ and 
$\mathbb{P}(\frac{X(n)}{\log\log n}\in\cdot)$ satisfies a large deviation
principle with speed $\log\log n$ and rate function
\begin{equation}
I(x):=
\begin{cases}
x\log x-x+1 &\text{if $x\geq 0$},
\\
+\infty &\text{otherwise}.
\end{cases}
\end{equation}
\end{corollary}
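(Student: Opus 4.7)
The plan is to deduce the corollary from Theorem \ref{LDPThm} in two steps: first, verify that Assumption \ref{MainAssumption} holds with $\rho = \delta_{\lambda}$ (the Dirac mass at $\lambda$); second, compute the Legendre transform to identify the rate function explicitly.

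For the first step, with $\rho = \delta_{\lambda}$ we have $\int_{\mathbb{R}} e^{\theta y}\rho(dy) = e^{\theta\lambda}$, which is finite for every $\theta\in\mathbb{R}$. To check the convergence condition, I would write
\begin{equation*}
\int_{\mathbb{R}} e^{\theta y}\rho_{n}(dy) = \frac{\sum_{p\leq n}\frac{1}{p}e^{\theta g(p)}}{\sum_{p\leq n}\frac{1}{p}},
\end{equation*}
and use the hypothesis $g(p)\to\lambda$ together with Mertens' theorem $\sum_{p\leq n}\frac{1}{p}\sim \log\log n\to\infty$. Given $\varepsilon>0$, pick $P$ with $|g(p)-\lambda|<\varepsilon$ for all primes $p>P$; the contribution of primes $p\leq P$ to both numerator and denominator is bounded, hence vanishes after dividing by $\log\log n$. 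The remaining ratio is sandwiched between $e^{\theta(\lambda-\varepsilon)}$ and $e^{\theta(\lambda+\varepsilon)}$ (with signs adjusted for the sign of $\theta$). Letting $n\to\infty$ and then $\varepsilon\to 0$ gives convergence to $e^{\theta\lambda}$.

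For the second step, Theorem \ref{LDPThm} yields the rate function $I(x)=\sup_{\theta\in\mathbb{R}}\{\theta x - (e^{\theta\lambda}-1)\}$. For $x>0$, the supremum is attained at $\theta^{*} = \frac{1}{\lambda}\log(x/\lambda)$, giving $I(x) = \frac{x}{\lambda}\log\frac{x}{\lambda} - \frac{x}{\lambda} + 1$. For $x=0$, the supremum is $\lim_{\theta\to-\infty}\{1-e^{\theta\lambda}\}=1$, matching the formula with the convention $0\log 0=0$. For $x<0$, sending $\theta\to-\infty$ makes $\theta x\to+\infty$ while $-(e^{\theta\lambda}-1)\to 1$, so $I(x)=+\infty$. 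The special case $g(p)\equiv 1$ corresponds to $\lambda=1$, giving $I(x)=x\log x - x + 1$ for $x\geq 0$.

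The only nontrivial point is the verification of the limit $\int e^{\theta y}\rho_{n}(dy)\to e^{\theta\lambda}$; the Legendre transform is a standard Poisson-type computation. No real obstacle is expected, since Mertens' theorem provides the asymptotic normalization that washes out the bounded initial segment of primes where $g(p)$ may deviate from $\lambda$.
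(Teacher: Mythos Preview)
Your proposal is correct and follows exactly the natural route: the paper states the corollary without proof, treating it as an immediate consequence of Theorem~\ref{LDPThm}, and your two steps (verify Assumption~\ref{MainAssumption} with $\rho=\delta_{\lambda}$ via Mertens' theorem, then compute the Legendre transform of $e^{\theta\lambda}-1$) are precisely what is needed to fill in the details. There is nothing to add.
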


\begin{remark}\label{FourRemarks}
(i) If Assumption \ref{MainAssumption} fails, Theorem \ref{LDPThm} may not hold.
For example, we can define
\begin{equation}
g(p)=
\begin{cases}
\lambda_{1} &\alpha_{2k}<p\leq\alpha_{2k+1}
\\
\lambda_{2} &\alpha_{2k+1}<p\leq\alpha_{2k+2}
\end{cases},
\end{equation}
where $0<\lambda_{1}<\lambda_{2}$ and $k=0,1,2,\ldots$ and $\alpha_{i}-\alpha_{i-1}$ are sufficiently
large so that for some small $\delta>0$, and some fixed $\theta>0$, we have
\begin{equation}
\frac{1}{\log\log\alpha_{n}}\sum_{p\leq\alpha_{n}}\frac{e^{\theta g(p)}-1}{p}\leq\lambda_{1}+\delta
\end{equation}
along the subsequence $\alpha_{n}$ when $n$ is odd and
\begin{equation}
\frac{1}{\log\log\alpha_{n}}\sum_{p\leq\alpha_{n}}\frac{e^{\theta g(p)}-1}{p}\geq\lambda_{2}-\delta
\end{equation}
along the subsequence $\alpha_{n}$ when $n$ is even. This shows that for any fixed $\theta>0$, the limit
$\frac{1}{\log\log n}\log\mathbb{E}[e^{\theta X(n)}]$ does not exist. Therefore there is no large deviation
principle as a result of Varadhan's lemma, see e.g. Dembo and Zeitouni \cite{Dembo}.

(ii) The assumption that for any $\theta\in\mathbb{R}$, $\int_{\mathbb{R}}e^{\theta y}\rho(dy)<\infty$ basically
says that the density of primes $p$ such that $g(p)$ is large should be small.  
For example, if $g(p)$ grows to infinity
as $p\rightarrow\infty$, then, the scaling of $\sum_{p\leq n}\frac{e^{\theta g(p)}-1}{p}$ will
depend on $\theta$ and thus there is no $f(n)\rightarrow\infty$ independent of $\theta$ so that
the limit $\frac{1}{f(n)}\sum_{p\leq n}\frac{e^{\theta g(p)}-1}{p}$ exists.
On the other hand, if $g(p)\rightarrow 0$ as $p\rightarrow\infty$. 
Then, $\frac{e^{\theta g(p)}-1}{p}\sim\frac{\theta g(p)}{p}$ as $p\rightarrow\infty$. 
Now, if also $\sum_{p\leq n}\frac{g(p)}{p}\rightarrow\infty$ 
as $n\rightarrow\infty$, then, we have
\begin{equation}
\lim_{n\rightarrow\infty}\frac{1}{\sum_{p\leq n}\frac{g(p)}{p}}\sum_{p\leq n}\frac{e^{\theta g(p)}-1}{p}\rightarrow\theta,
\end{equation}
and the rate function
for the large deviations is then trivial. Again, if $\sum_{p}\frac{g(p)}{p}<\infty$, there is no Donsker-Varadhan type
large deviations.

(iii) Let $p_{1}<p_{2}<p_{3}<\cdots$ be the ordered sequence of all the prime numbers. 
Prime number theorem implies that
$p_{k+1}-p_{k}\leq\frac{p_{k}}{(\log p_{k})^{2}}$
for sufficiently large $k$. This shows that 
\begin{equation}
\sum_{p_{k}\leq n}\left(\frac{1}{p_{k}}-\frac{1}{p_{k+1}}\right)
=\sum_{p_{k}\leq n}\frac{p_{k+1}-p_{k}}{p_{k}p_{k+1}}\leq C\sum_{p_{k}\leq n}\frac{1}{p_{k}(\log p_{k})^{2}}
\end{equation}
for some universal constant $C$ and the series is convergent. 
Therefore, we have $\sum_{p_{2k}\leq n}\frac{1}{p_{2k}}\sim\frac{1}{2}\log\log n$
and $\sum_{p_{2k+1}\leq n}\frac{1}{p_{2k+1}}\sim\frac{1}{2}\log\log n$.
Let $0<\lambda_{1}<\lambda_{2}<\infty$.
Define $g(p_{k})=\lambda_{1}$ if $k$ is odd and $g(p_{k})=\lambda_{2}$ if $k$ is even.
Thus, we have
\begin{equation}
\lim_{n\rightarrow\infty}\int_{\mathbb{R}}(e^{\theta y}-1)\rho_{n}(dy)
=\frac{1}{2}(e^{\theta\lambda_{1}}-1)+\frac{1}{2}(e^{\theta\lambda_{2}}-1).
\end{equation}
Hence, we conclude that $\mathbb{P}(\frac{X(n)}{\log\log n}\in\cdot)$ satisfies a large deviation
principle with speed $\log\log n$ and rate function
\begin{equation}
I(x):=
\begin{cases}
\sup_{\theta\in\mathbb{R}}\left\{\theta x-\frac{1}{2}(e^{\theta\lambda_{1}}-1)-\frac{1}{2}(e^{\theta\lambda_{2}}-1)\right\} &\text{if $x\geq 0$},
\\
+\infty &\text{otherwise}.
\end{cases}
\end{equation}
In some special cases, there is an explicit expression for the rate function.
For example, consider $\lambda_{2}=2\lambda_{1}\in(0,\infty)$. In this case, the optimal $\theta$ is given by
\begin{equation}
\theta_{\ast}=\frac{1}{\lambda_{1}}\log
\left(\frac{-\lambda_{1}+\sqrt{\lambda_{1}^{2}+16\lambda_{1}x}}{4\lambda_{1}}\right).
\end{equation}
Hence, for $x\geq 0$, 
\begin{align}\label{ExampleThree}
I(x)&=\theta_{\ast}x-\frac{1}{2}(e^{\theta_{\ast}\lambda_{1}}-1)-\frac{1}{2}(e^{\theta_{\ast}\lambda_{2}}-1)
\\
&=\frac{x}{\lambda_{1}}\log
\left(\frac{-\lambda_{1}+\sqrt{\lambda_{1}^{2}+16\lambda_{1}x}}{4\lambda_{1}}\right)
+1\nonumber
\\
&\qquad\qquad
-\frac{1}{2}\left(\frac{-\lambda_{1}+\sqrt{\lambda_{1}^{2}+16\lambda_{1}x}}{4\lambda_{1}}\right)
-\frac{1}{2}\left(\frac{-\lambda_{1}+\sqrt{\lambda_{1}^{2}+16\lambda_{1}x}}{4\lambda_{1}}\right)^{2}.
\nonumber
\end{align}
\end{remark}

Here are some examples in which we can get an explicit expression for the rate function $I(x)$.

\begin{example}
(i) Assume that $\rho(dy)=\frac{1}{2}\delta_{\lambda_{1}}+\frac{1}{2}\delta_{2\lambda_{1}}$.
Then, from Remark \ref{FourRemarks} (iii), for $x\geq 0$, the rate function $I(x)$ has explicit
expression as given in \eqref{ExampleThree}.

(ii) Assume that $\rho(dy)$ has Poisson distribution with parameter $\lambda>0$. Then, we have
\begin{equation}
\theta x-\int_{0}^{\infty}(e^{\theta y}-1)\rho(dy)
=\theta x+1-e^{\lambda(e^{\theta}-1)}.
\end{equation}
Differentiating with respect to $\theta$ and setting the derivative as zero, we get
$x=\lambda e^{\theta}e^{\lambda(e^{\theta}-1)}$. 
Then, we get the optimal $\theta_{\ast}=\log(\frac{1}{\lambda}W(xe^{\lambda}))$,
where $W(z)$ is the Lambert W function defined as $z=W(z)e^{W(z)}$ for any $z\in\mathbb{C}$, see e.g. Corless et al. \cite{Corless}.
Hence, for $x\geq 0$, we have
\begin{equation}
I(x)=x\log\left(\frac{1}{\lambda}W(xe^{\lambda})\right)+1-e^{W(xe^{\lambda})-\lambda}.
\end{equation}

(iii) Assume that $\rho(dy)$ has binomial distribution with parameters $n$ and $\beta$.
Then, we get $\int_{0}^{\infty}e^{\theta y}\rho(dy)=(1-\beta+\beta e^{\theta})^{n}$. Thus, the optimal $\theta_{\ast}$
satisfies $x=n(1-\beta+\beta e^{\theta_{\ast}})^{n-1}\beta e^{\theta_{\ast}}$.
For $n=1$, $I(x)=x\log(x/\beta)+\beta-x$. For $n=2$, 
\begin{equation}
I(x)=x\log\left(\frac{-(1-\beta)+\sqrt{(1-\beta)^{2}+2x}}{2\beta}\right)
-\left(\frac{1-\beta+\sqrt{(1-\beta)^{2}+2x}}{2}\right)^{2}.
\end{equation}

(iv) Assume that $\rho(dy)$ has Gaussian distribution with mean $0$ and variance $1$. Then, we get
$\int_{-\infty}^{\infty}e^{\theta y}\rho(dy)=e^{\frac{1}{2}\theta^{2}}$. Thus, the optimal $\theta_{\ast}$
satisfies $x=\theta_{\ast}e^{\frac{1}{2}\theta_{\ast}^{2}}$, which is equivalent to $x^{2}=\theta_{\ast}^{2}e^{\theta_{\ast}^{2}}$.
Therefore, we have $\theta_{\ast}=\sqrt{W(x^{2})}$, where $W(\cdot)$ is the Lambert W function and
\begin{equation}
I(x)=\sqrt{W(x^{2})}x+1-\frac{x}{\sqrt{W(x^{2})}}.
\end{equation}
\end{example}

We conclude this section by stating a moderate rate deviation principle,
which fills in the gap between central limit theorem and large deviation principle.

\begin{theorem}\label{MDPThm}
Let $\mu_{n}:=\sum_{p\leq n}\frac{g(p)}{p}$
and $\sigma_{n}^{2}:=\sum_{p\leq n}\frac{g(p)^{2}}{p}$. 
Let $a_{n}$ be a positive sequence so that 
$\frac{\sigma_{n}}{a_{n}}\rightarrow 0$ and $\frac{a_{n}}{\sigma_{n}^{2}}\rightarrow 0$
as $n\rightarrow\infty$.
Under Assumption \ref{MainAssumption},
$\mathbb{P}(\frac{X(n)-\mu_{n}}{a_{n}}\in\cdot)$ satisfies a large deviation principle 
with speed $a_{n}^{2}/\sigma_{n}^{2}$ and rate function
\begin{equation}
J(x):=\frac{x^{2}}{2}.
\end{equation}
\end{theorem}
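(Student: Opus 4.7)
The plan is to verify the hypotheses of the Gärtner--Ellis theorem at speed $b_n := a_n^2/\sigma_n^2$. Set $\alpha_n(\theta) := \theta a_n/\sigma_n^2$; since $a_n \ll \sigma_n^2$, we have $\alpha_n(\theta)\to 0$ for each fixed $\theta$. It suffices to show
\begin{equation*}
\Lambda(\theta) := \lim_{n\to\infty} \frac{\sigma_n^2}{a_n^2}\Bigl(\log \mathbb{E}\!\bigl[e^{\alpha_n(\theta) X(n)}\bigr] - \alpha_n(\theta)\mu_n\Bigr) = \frac{\theta^2}{2}
\end{equation*}
for every $\theta\in\mathbb{R}$, since the Legendre transform of $\theta^2/2$ is $x^2/2 = J(x)$.

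For the moment generating function I would reuse the same approximation of $(Z_p)_{p\leq n}$ by independent Bernoullis $\tilde Z_p$ with $\mathbb{P}(\tilde Z_p = 1) = 1/p$ that underlies the proof of Theorem \ref{LDPThm}, together with the identity $\log\prod_{p\leq n}(1 + (e^{\alpha g(p)}-1)/p) = \sum_{p\leq n}\log(1+(e^{\alpha g(p)}-1)/p)$. Expanding $\log(1+x) = x + O(x^2)$ and using that $\sum_p g(p)^2/p^2<\infty$ (a consequence of partial summation together with $\sum_{p\leq n} g(p)^2/p = \sigma_n^2 \asymp \log\log n$) shows that the nonlinearity of $\log(1+\cdot)$ contributes at most $O(\alpha_n(\theta)^2)$, which becomes $O(1/\sigma_n^2)$ after multiplication by $\sigma_n^2/a_n^2$, hence negligible. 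Thus
\begin{equation*}
\log \mathbb{E}\!\bigl[e^{\alpha_n(\theta) X(n)}\bigr] = \sum_{p \leq n} \frac{e^{\alpha_n(\theta) g(p)} - 1}{p} + o(a_n^2/\sigma_n^2).
\end{equation*}

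Taylor expanding the exponential gives
\begin{equation*}
\sum_{p \leq n} \frac{e^{\alpha_n(\theta) g(p)} - 1}{p} = \alpha_n(\theta) \mu_n + \frac{\alpha_n(\theta)^2}{2}\sigma_n^2 + \sum_{k \geq 3} \frac{\alpha_n(\theta)^k}{k!}\sum_{p \leq n} \frac{g(p)^k}{p}.
\end{equation*}
The linear term cancels against $\alpha_n(\theta)\mu_n$; the quadratic term, multiplied by $\sigma_n^2/a_n^2$, gives exactly $\theta^2/2$. For $k\geq 3$, the elementary inequality $|y|^k \leq k!(e^y+e^{-y})$ together with Assumption \ref{MainAssumption} yields $\sum_{p\leq n}|g(p)|^k/p = O(k!\,\sigma_n^2)$, so the $k$-th term contributes $O\!\bigl(|\theta|^k (a_n/\sigma_n^2)^{k-2}\bigr)$ after multiplication by $\sigma_n^2/a_n^2$. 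These all vanish because $a_n/\sigma_n^2 \to 0$, and the sum over $k$ converges for $n$ large since $\alpha_n(\theta)\to 0$. Gärtner--Ellis then delivers the MDP with rate function $J(x)=x^2/2$.

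The main difficulty is quantifying the independent-Bernoulli approximation at the moderate-deviations scale: in the LDP context the target precision is $o(\log\log n)$, whereas here it is the much smaller $o(a_n^2/\sigma_n^2)$. I would handle this by splitting the prime product at $n^{1/2}$, using the exact identity $\mathbb{P}(Z_{p_1}\cdots Z_{p_k}=1) = \lfloor n/(p_1\cdots p_k)\rfloor/n$ on the small-prime side (where the discrepancy from $\prod 1/p_i$ is $O(p_1\cdots p_k/n)$), and controlling the contribution of primes $p>n^{1/2}$ directly via exponential-moment bounds coming from Assumption \ref{MainAssumption}. A secondary technicality is uniformity of the Taylor remainders when $g(p)$ may be unbounded; this can be absorbed by partitioning primes according to the size of $|g(p)|$ and invoking the exponential moment hypothesis on the tail.
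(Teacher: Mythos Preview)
Your overall strategy---G\"artner--Ellis at speed $a_n^2/\sigma_n^2$, truncation at $|g(p)|\le C$, replacement of the $Z_p$ by independent Bernoullis $Y_p$, and Taylor expansion of the resulting product---is exactly what the paper does. The computation of the limiting cumulant $\theta^2/2$ is also correct once the Bernoulli approximation is in place.

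The gap is in your last paragraph: splitting the primes at $n^{1/2}$ does \emph{not} make the Bernoulli comparison go through. The moment comparison one actually uses (see the paper's Lemma~\ref{ZYLemma}) is
\[
\bigl|\mathbb{E}[S_n^r]-\mathbb{E}[\tilde S_n^r]\bigr|\ \le\ \frac{1}{n}\Bigl(C\cdot\#\{p\le k_n\}\Bigr)^r,
\]
coming from the fact that each joint moment differs by at most $1/n$ and there are at most $(\#\{p\le k_n\})^r$ terms. With $k_n=n^{1/2}$ the right-hand side is of order $(Cn^{1/2}/\log n)^r/n$, which explodes already for $r\ge 3$; multiplying by $\alpha_n^r/r!$ does not help because $\alpha_n\,\pi(n^{1/2})\asymp (a_n/\sigma_n^2)\,n^{1/2}/\log n\to\infty$. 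So the series bounding $|\mathbb{E}[e^{\alpha_n S_n}]-\mathbb{E}[e^{\alpha_n\tilde S_n}]|$ diverges, and the approximation fails at this threshold. (Your stated discrepancy $O(p_1\cdots p_k/n)$ is also off---it is $O(1/n)$---but that is not the issue.)

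The paper's remedy is to take a much smaller cutoff, namely $k_n=n^{a_n/\sigma_n^2}$. Then $\log k_n=(a_n/\sigma_n^2)\log n$, and the moment comparison can be pushed through exactly as in Lemma~\ref{ZYLemma} with the speed $\log\log n$ replaced by $a_n^2/\sigma_n^2$. The price is that one must separately show the contribution of primes in $(k_n,n]$ is superexponentially negligible; this follows as in Lemma~\ref{LargePrimes}, since $\sum_{k_n<p\le n}1/p\sim\log(\sigma_n^2/a_n)$ and a Chernoff bound with fixed exponent gives a rate $-\theta\epsilon\,\sigma_n^2/a_n\to-\infty$. Once you replace $n^{1/2}$ by $n^{a_n/\sigma_n^2}$, the rest of your argument is the paper's.
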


\section{Proofs}

\subsection{Proof of Large Deviation Principle}

In this section, we will prove Theorem \ref{LDPThm}. The proof consists of a series of superexponential estimates, 
i.e. Lemma \ref{LargeC}, Lemma \ref{LargePrimes} and Lemma \ref{ZYLemma},
and an application of G\"{a}rtner-Ellis theorem (see e.g. Chapter 2 in Dembo and Zeitouni \cite{Dembo}).
For the convenience of the readers, we state here a special case of the G\"{a}rtner-Ellis theorem (Theorem 2.3.6 \cite{Dembo})
that we will use in the proofs in our paper.

\begin{theorem*}[G\"{a}rtner-Ellis Theorem \cite{Dembo}]
Let $Z_{n}$ be a sequence of random variables on $\mathbb{R}$ and $a_{n}$
is a positive sequence that goes to infinity as $n$ goes to infinity. Assume that
for any $\theta\in\mathbb{R}$, the limit $\Lambda(\theta):=\lim_{n\rightarrow\infty}\frac{1}{a_{n}}\log\mathbb{E}[e^{\theta a_{n}Z_{n}}]$
exists and is differentiable for every $\theta\in\mathbb{R}$. Then, $\mathbb{P}(Z_{n}\in\cdot)$ satisfies
a large deviation principle with rate function $I(x):=\sup_{\theta\in\mathbb{R}}\{\theta x-\Lambda(\theta)\}$
with speed $a_{n}$.
\end{theorem*}

Write $X(n)=\sum_{p\leq n}g(p)Z_{p}$, where $Z_{p}=1$ if $V(n)$ is divisible by $p$
and $0$ otherwise. The first step is to show that $\sum_{p\leq n}g(p)Z_{p}$
can be approximated by $\sum_{|g(p)|\leq C}g(p)Z_{p}$ in the following sense.

\begin{lemma}\label{LargeC}
for any $\epsilon>0$,
\begin{equation}\label{SuperC}
\limsup_{C\rightarrow\infty}\limsup_{n\rightarrow\infty}
\frac{1}{\log\log n}\log\mathbb{P}
\left(\left|\sum_{|g(p)|>C,p\leq n}g(p)Z_{p}\right|\geq\epsilon\log\log n\right)=-\infty.
\end{equation}
\end{lemma}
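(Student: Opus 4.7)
The plan is to carry out an exponential Markov estimate on the truncated sum
\[
Y_{n,C}:=\sum_{|g(p)|>C,\,p\leq n,\,p\in\mathcal{P}}g(p)Z_{p},
\]
applied to both $Y_{n,C}$ and $-Y_{n,C}$, and show that the logarithmic moment generating function of $Y_{n,C}$, divided by $\log\log n$, can be made arbitrarily small in absolute value by choosing $C$ large. Let $A=A_{n,C}:=\{p\in\mathcal{P}:p\leq n,\,|g(p)|>C\}$. The starting observation is that $Z_{p}\in\{0,1\}$, so $e^{\theta g(p)Z_{p}}=1+(e^{\theta g(p)}-1)Z_{p}$ and hence
\[
e^{\theta Y_{n,C}}=\prod_{p\in A}\bigl(1+(e^{\theta g(p)}-1)Z_{p}\bigr).
\]
Expanding the product indexed by subsets $S\subset A$ and using that $\prod_{p\in S}Z_{p}=\mathbf{1}\{m_{S}\mid V(n)\}$, where $m_{S}=\prod_{p\in S}p$, gives $\mathbb{E}\prod_{p\in S}Z_{p}=\lfloor n/m_{S}\rfloor/n\leq 1/m_{S}$. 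Since $\mathbb{E}[e^{\theta Y_{n,C}}]\geq 0$, taking absolute values term by term yields
\[
\mathbb{E}[e^{\theta Y_{n,C}}]\leq\prod_{p\in A}\Bigl(1+\frac{|e^{\theta g(p)}-1|}{p}\Bigr)\leq\exp\!\Bigl(\sum_{p\in A}\frac{|e^{\theta g(p)}-1|}{p}\Bigr).
\]

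Next I would translate the sum over $A$ into an integral against $\rho_{n}$. By Mertens' theorem $\sum_{p\leq n,p\in\mathcal{P}}\frac{1}{p}=\log\log n+O(1)$, so
\[
\sum_{p\in A}\frac{|e^{\theta g(p)}-1|}{p}=\bigl(\log\log n+O(1)\bigr)\int_{|y|>C}|e^{\theta y}-1|\,\rho_{n}(dy).
\]
The hypothesis that $\int e^{\theta y}\rho_{n}(dy)\to\int e^{\theta y}\rho(dy)<\infty$ for every $\theta\in\mathbb{R}$ upgrades, via the elementary bound $e^{\theta y}\mathbf{1}_{|y|>C}\leq e^{-C}e^{(\theta\pm 1)y}$, to uniform integrability of $e^{\theta y}$ under $\{\rho_{n}\}$, so
\[
\eta(C):=\limsup_{n\to\infty}\int_{|y|>C}|e^{\theta y}-1|\,\rho_{n}(dy)\xrightarrow[C\to\infty]{}0
\]
for each fixed $\theta$ (by dominated convergence against $\rho$, combined with the uniform tail control).

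Combining with Markov's inequality $\mathbb{P}(Y_{n,C}\geq\epsilon\log\log n)\leq e^{-\theta\epsilon\log\log n}\mathbb{E}[e^{\theta Y_{n,C}}]$ for $\theta>0$, and the analogous estimate with $\theta$ replaced by $-\theta$ to handle $\mathbb{P}(Y_{n,C}\leq-\epsilon\log\log n)$, I would obtain
\[
\limsup_{n\to\infty}\frac{1}{\log\log n}\log\mathbb{P}(|Y_{n,C}|\geq\epsilon\log\log n)\leq-\theta\epsilon+\eta(C)+\eta(-C),
\]
where $\eta(\pm C)$ is defined with $\pm\theta$ in the exponent. Sending $C\to\infty$ kills the $\eta$ terms, and then sending $\theta\to\infty$ drives the bound to $-\infty$, which is the claimed superexponential estimate.

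The main obstacle is the sign issue: because $g$ can take both signs, the expansion $\sum_{S}\prod_{p\in S}(e^{\theta g(p)}-1)\mathbb{E}\prod_{p\in S}Z_{p}$ is not a sum of positive terms, and one cannot simply compare to the independent Bernoulli product. The trick of using the nonnegativity of $e^{\theta Y_{n,C}}$ to pass to absolute values inside the sum resolves this. A second, more technical, point is verifying that the MGF convergence $\int e^{\theta y}\rho_{n}(dy)\to\int e^{\theta y}\rho(dy)$ actually implies the tail control $\int_{|y|>C}e^{\theta y}\rho_{n}(dy)\to 0$ uniformly in $n$ as $C\to\infty$, which uses that slightly larger exponents $\theta\pm 1$ are also finite in the limit.
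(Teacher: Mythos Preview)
Your argument is correct and follows the same overall strategy as the paper: an exponential Markov bound combined with the elementary comparison $\mathbb{E}\bigl[\prod_{p\in S}Z_{p}\bigr]\leq\prod_{p\in S}\tfrac{1}{p}$, then letting $C\to\infty$ and finally $\theta\to\infty$. The one genuine variation is in how you deal with the sign of $g$. The paper splits the sum into the pieces $\{g(p)>C\}$ and $\{g(p)<-C\}$ and chooses the sign of $\theta$ so that every coefficient $e^{\theta g(p)}-1$ is nonnegative; the comparison with independent Bernoulli variables $Y_{p}$ is then immediate term by term. You instead keep the full set $\{|g(p)|>C\}$, expand the product over subsets, and pass to absolute values via the triangle inequality, which yields $\prod_{p}\bigl(1+\tfrac{|e^{\theta g(p)}-1|}{p}\bigr)$. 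Both routes land on the same tail integral; yours avoids the case split at the price of carrying $|e^{\theta y}-1|$ rather than $e^{\theta y}-1$, and you are accordingly a bit more explicit than the paper about why the restricted integral $\int_{|y|>C}$ is small uniformly in $n$ (the uniform integrability observation using exponents $\theta\pm1$).
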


\begin{proof}
Note that \eqref{SuperC} holds if we can prove the following two estimates,
\begin{equation}\label{SuperCPositive}
\limsup_{C\rightarrow\infty}\limsup_{n\rightarrow\infty}
\frac{1}{\log\log n}\log\mathbb{P}
\left(\sum_{g(p)>C,p\leq n}g(p)Z_{p}\geq\epsilon\log\log n\right)=-\infty,
\end{equation}
and
\begin{equation}\label{SuperCNegative}
\limsup_{C\rightarrow\infty}\limsup_{n\rightarrow\infty}
\frac{1}{\log\log n}\log\mathbb{P}
\left(\sum_{g(p)<-C,p\leq n}g(p)Z_{p}\leq-\epsilon\log\log n\right)=-\infty.
\end{equation}

Before we proceed, let us define independent random variables $Y_{p}$, so that
\begin{equation}
Y_{p}=
\begin{cases}
1 &\text{with probability $\frac{1}{p}$},
\\
0 &\text{with probability $1-\frac{1}{p}$}.
\end{cases}
\end{equation}
For distinct primes $p_{1},p_{2},\ldots,p_{\ell}$,
\begin{equation}
\mathbb{E}[Z_{p_{1}}Z_{p_{2}}\cdots Z_{p_{\ell}}]
=\frac{1}{n}\bigg\lfloor\frac{n}{p_{1}p_{2}\cdots p_{\ell}}\bigg\rfloor
\leq\frac{1}{p_{1}p_{2}\cdots p_{\ell}}
=\mathbb{E}[Y_{p_{1}}Y_{p_{2}}\cdots Y_{p_{\ell}}],
\end{equation}
where $\lfloor x\rfloor$ denotes the largest integer less than or equal to $x$.
Therefore, for any non-negative sequence $\theta_{p}$, by Taylor's expansion, we have
\begin{equation}
\mathbb{E}\left[e^{\sum_{p\leq n}\theta_{p}Z_{p}}\right]
\leq\mathbb{E}\left[e^{\sum_{p\leq n}\theta_{p}Y_{p}}\right].
\end{equation}
This fact will be used repeatedly later on in the paper.

For $g(p)>C>0$, for any $\theta>0$, by Chebychev's inequality, we have $g(p)\theta>0$ and
\begin{align}
&\limsup_{n\rightarrow\infty}
\frac{1}{\log\log n}\log\mathbb{P}
\left(\sum_{g(p)>C,p\leq n}g(p)Z_{p}\geq\epsilon\log\log n\right)
\\
&\leq\limsup_{n\rightarrow\infty}\frac{1}{\log\log n}
\log\mathbb{E}\left[e^{\theta\sum_{g(p)>C,p\leq n}g(p)Z_{p}}\right]-\theta\epsilon
\nonumber
\\
&\leq
\limsup_{n\rightarrow\infty}\frac{1}{\log\log n}
\log\mathbb{E}\left[e^{\theta\sum_{g(p)>C,p\leq n}g(p)Y_{p}}\right]-\theta\epsilon
\nonumber
\\
&=\limsup_{n\rightarrow\infty}\frac{\sum_{g(p)>C, p\leq n}\log(\frac{e^{\theta g(p)}-1}{p}+1)}{\sum_{p\leq n}\frac{1}{p}}
-\theta\epsilon
\nonumber
\\
&\leq
\limsup_{n\rightarrow\infty}\frac{\sum_{g(p)>C, p\leq n}\frac{e^{\theta g(p)}-1}{p}}{\sum_{p\leq n}\frac{1}{p}}
-\theta\epsilon
\nonumber
\\
&=\limsup_{n\rightarrow\infty}\int_{y>C}(e^{\theta y}-1)\rho_{n}(dy)-\theta\epsilon
\nonumber
\\
&=\int_{y>C}(e^{\theta y}-1)\rho(dy)-\theta\epsilon,\nonumber
\end{align}
and the limit goes to $-\theta\epsilon$ as $C\rightarrow\infty$. By letting $\theta\rightarrow\infty$, 
we obtained \eqref{SuperCPositive}. Similarly, for $g(p)<-C<0$, by taking $\theta<0$, we get
\begin{align}
&\limsup_{n\rightarrow\infty}
\frac{1}{\log\log n}\log\mathbb{P}
\left(\sum_{g(p)<-C,p\leq n}g(p)Z_{p}\leq-\epsilon\log\log n\right)
\\
&\leq\int_{y<-C}(e^{\theta y}-1)\rho(dy)+\theta\epsilon,\nonumber
\end{align}
where the limit goes to $\theta\epsilon$ as $C\rightarrow\infty$. By letting $\theta\rightarrow-\infty$, 
we obtain \eqref{SuperCNegative}.
\end{proof}

Let 
\begin{equation}
k_{n}:=n^{\frac{1}{(\log\log n)^{2}}}.
\end{equation}
The second step is to show that $\sum_{p\leq n,|g(p)|\leq C}g(p)Z_{p}$
can be approximated by $\sum_{p\leq k_{n},|g(p)|\leq C}g(p)Z_{p}$ in the following sense.

\begin{lemma}\label{LargePrimes}
For any $\epsilon>0$,
\begin{equation}\label{superI}
\limsup_{n\rightarrow\infty}\frac{1}{\log\log n}\log\mathbb{P}
\left(\left|\sum_{p\in A(n,C)}g(p)Z_{p}\right|\geq\epsilon\log\log n\right)=-\infty,
\end{equation}
where $A(n,C):=\{p:k_{n}\leq p\leq n,|g(p)|\leq C\}$.
\end{lemma}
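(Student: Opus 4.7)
The plan is to mirror the exponential-Chebyshev argument of Lemma~\ref{LargeC} but exploit two new ingredients specific to the range $A(n,C)$: the uniform bound $|g(p)|\leq C$, and the observation (via Mertens' theorem) that the prime harmonic sum over $[k_{n},n]$ is $o(\log\log n)$ when $k_{n}=n^{1/(\log\log n)^{2}}$.

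First I split the sum as $\sum_{p\in A(n,C)}g(p)Z_{p}=S^{+}-T^{-}$, where $S^{+}:=\sum_{p\in A(n,C),\,g(p)>0}g(p)Z_{p}\geq 0$ and $T^{-}:=\sum_{p\in A(n,C),\,g(p)<0}|g(p)|Z_{p}\geq 0$. Since both $S^{+}$ and $T^{-}$ are non-negative, the event $\{|S^{+}-T^{-}|\geq\epsilon\log\log n\}$ is contained in $\{S^{+}\geq\epsilon\log\log n\}\cup\{T^{-}\geq\epsilon\log\log n\}$, and it suffices to prove superexponential decay for each upper tail separately. For each tail I apply Markov's inequality with some $\theta>0$ and then invoke the domination $\mathbb{E}[e^{\sum\theta_{p}Z_{p}}]\leq\mathbb{E}[e^{\sum\theta_{p}Y_{p}}]$ established in Lemma~\ref{LargeC}, which requires non-negative coefficients; this is precisely why the positive/negative split is needed.

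Independence of the $Y_{p}$'s then gives
\begin{equation*}
\log\mathbb{E}\bigl[e^{\theta S^{+}}\bigr]\leq\sum_{p\in A(n,C),\,g(p)>0}\log\!\Bigl(1+\tfrac{e^{\theta g(p)}-1}{p}\Bigr)\leq (e^{\theta C}-1)\!\sum_{k_{n}\leq p\leq n}\tfrac{1}{p},
\end{equation*}
using $|g(p)|\leq C$. Mertens' theorem yields $\sum_{p\leq n}\tfrac{1}{p}=\log\log n+M+o(1)$, and since $\log\log k_{n}=\log\log n-2\log\log\log n+o(1)$, this gives $\sum_{k_{n}\leq p\leq n}\tfrac{1}{p}=2\log\log\log n+o(1)=o(\log\log n)$. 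Consequently $\tfrac{1}{\log\log n}\log\mathbb{E}[e^{\theta S^{+}}]\to 0$, and Chebyshev produces the bound $-\theta\epsilon$, which tends to $-\infty$ as $\theta\to\infty$. The identical chain of estimates applied to $T^{-}$ (whose coefficients $|g(p)|$ also lie in $[0,C]$) completes the proof.

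The step to pay closest attention to is the Mertens estimate: one must verify that the specific cutoff $k_{n}=n^{1/(\log\log n)^{2}}$ is sharp enough to render $\sum_{k_{n}\leq p\leq n}\tfrac{1}{p}=o(\log\log n)$, because that is exactly what converts the crude uniform bound $e^{\theta C}-1$ into something negligible at the $\log\log n$ scale. Beyond that, only routine bookkeeping remains; in particular, no further analytic number theory enters, and the assumption $\int e^{\theta y}\rho(dy)<\infty$ is not invoked here since the cutoff $|g(p)|\leq C$ makes all exponentials trivially finite.
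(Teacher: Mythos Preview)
Your proof is correct and follows essentially the same route as the paper: Chebyshev/Markov plus the $Z_{p}$-to-$Y_{p}$ domination, the uniform bound $|g(p)|\leq C$, and Mertens' estimate $\sum_{k_{n}\leq p\leq n}\tfrac{1}{p}\sim 2\log\log\log n=o(\log\log n)$. The only cosmetic difference is that the paper handles the absolute value in one stroke via $\bigl|\sum g(p)Z_{p}\bigr|\leq C\sum Z_{p}$ before exponentiating, rather than splitting into positive and negative parts.
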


\begin{proof}
For any $\theta>0$,
\begin{align}
\mathbb{E}\left[e^{\theta|\sum_{p\in A(n,C)}g(p)Z_{p}|}\right]
&\leq\mathbb{E}\left[e^{\theta\sum_{p\in A(n,C)}|g(p)|Z_{p}}\right]
\\
&\leq\mathbb{E}\left[e^{\theta C\sum_{p\in A(n,C)}Z_{p}}\right]
\nonumber
\\
&\leq\mathbb{E}\left[e^{\theta C\sum_{p\in A(n,C)}Y_{p}}\right].
\nonumber
\end{align}
Therefore, for any $\theta>0$, we have
\begin{align}
\log\mathbb{E}\left[e^{\theta|\sum_{p\in A(n,C)}g(p)Z_{p}|}\right]
&\leq\log\mathbb{E}\left[e^{\theta C\sum_{p\in A(n,C)}Y_{p}}\right]
\\
&=\sum_{p\in A(n,C)}\log\left((e^{\theta C}-1)\frac{1}{p}+1\right)
\nonumber
\\
&\leq(e^{\theta C}-1)\sum_{k_{n}\leq p\leq n}\frac{1}{p}.
\nonumber
\end{align}
We also have
\begin{align}
\sum_{k_{n}\leq p\leq n}\frac{1}{p}
&=\log\log n-\log\log k_{n}+o(1)
\\
&=\log\log n-\log\left(\frac{1}{(\log\log n)^{2}}\log n\right)+o(1)
\nonumber
\\
&=2\log\log\log n+o(1),\nonumber
\end{align}
where we used the fact that $\sum_{p\leq n}\frac{1}{p}-\log\log n$ converges to the Meissel-Mertens constant.
Notice that $\frac{2\log\log\log n}{\log\log n}\rightarrow 0$ as $n\rightarrow\infty$. Therefore, by Chebychev's inequality,
\begin{equation}
\limsup_{n\rightarrow\infty}\frac{1}{\log\log n}\log\mathbb{P}
\left(\left|\sum_{p\in A(n,C)}g(p)Z_{p}\right|\geq\epsilon\log\log n\right)
\leq-\epsilon\theta.
\end{equation}
This proves \eqref{superI} since it holds for any $\theta>0$.
\end{proof}

Next, let us show that $\mathbb{E}\left[e^{\theta\sum_{p\leq k_{n},|g(p)|\leq C}g(p)Z_{p}}\right]$
can be approximated by $\mathbb{E}\left[e^{\theta\sum_{p\leq k_{n},|g(p)|\leq C}g(p)Y_{p}}\right]$ 
in an appropriate way.

\begin{lemma}\label{ZYLemma}
For any $\theta\in\mathbb{R}$,
\begin{equation}
\lim_{n\rightarrow\infty}\frac{1}{\log\log n}\log\left|\mathbb{E}\left[e^{\theta\sum_{p\in B(n,C)}g(p)Z_{p}}\right]
-\mathbb{E}\left[e^{\theta\sum_{p\in B(n,C)}g(p)Y_{p}}\right]\right|=-\infty,
\end{equation}
where $B(n,C):=\{p:p\leq k_{n},|g(p)|\leq C\}$.
\end{lemma}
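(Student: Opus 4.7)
\emph{Proof plan.} The plan is to expand both moment generating functions as sums over squarefree integers, then bound the resulting difference piece by piece. Using $Z_p^{2}=Z_p$, one has $e^{\theta g(p) Z_p}=1+(e^{\theta g(p)}-1)Z_p$; taking the product over $p\in B(n,C)$ and then expectations (and analogously for the independent $Y_p$'s) yields, with $\mathcal{M}$ the set of squarefree integers whose prime factors all lie in $B(n,C)$ and $h(m):=\prod_{p\mid m}(e^{\theta g(p)}-1)$,
\[
\mathbb{E}\bigl[e^{\theta\sum_{p\in B(n,C)}g(p)Z_p}\bigr]=\sum_{m\in\mathcal{M}}\frac{h(m)\lfloor n/m\rfloor}{n},\qquad \mathbb{E}\bigl[e^{\theta\sum_{p\in B(n,C)}g(p)Y_p}\bigr]=\sum_{m\in\mathcal{M}}\frac{h(m)}{m}.
\]
Writing $\lfloor n/m\rfloor=n/m-\{n/m\}$ and noting the first sum is supported on $m\leq n$, the difference splits as
\[
-\frac{1}{n}\sum_{m\in\mathcal{M},\,m\leq n}h(m)\{n/m\}\;-\;\sum_{m\in\mathcal{M},\,m>n}\frac{h(m)}{m}\;=:\;-I_1-I_2,
\]
and I would show each of $|I_1|,|I_2|$ decays faster than any negative power of $\log n$.

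For $I_2$, since every prime factor of $m\in\mathcal{M}$ is $\leq k_n=n^{1/(\log\log n)^{2}}$, the constraint $m>n$ forces $\omega(m)>(\log\log n)^{2}$. With $A:=\sup_{|y|\leq C}|e^{\theta y}-1|$ and $S_n:=\sum_{p\in B(n,C)}1/p=\log\log n+O(1)$, using $|h(m)|\leq A^{\omega(m)}$ and grouping by $k=\omega(m)$ gives $|I_2|\leq\sum_{k>(\log\log n)^{2}}(AS_n)^{k}/k!$. Since the cutoff $(\log\log n)^{2}$ asymptotically dominates the ``mean'' $AS_n\asymp\log\log n$, Stirling yields $|I_2|\leq\exp\bigl(-(\log\log n)^{2}\log\log\log n\cdot(1+o(1))\bigr)$, which is super-polynomially small in $\log n$.

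For $I_1$, using $\{n/m\}<1$ and $|h(m)|\leq A^{\omega(m)}$ reduces matters to bounding $n^{-1}\sum_{m\leq n,\,m\in\mathcal{M}}A^{\omega(m)}$. The main obstacle here is that the Selberg--Delange-type estimate $\sum_{m\leq n}A^{\omega(m)}\asymp n(\log n)^{A-1}$ yields only polynomial decay after dividing by $n$, so the smoothness restriction $P^{+}(m)\leq k_n$ must be exploited. To this end I would apply Rankin's trick: for any $s\in(0,1)$,
\[
\sum_{m\leq n,\,m\in\mathcal{M}}A^{\omega(m)}\leq n^{s}\prod_{p\leq k_n}(1+A/p^{s})\leq\exp\!\Bigl(s\log n+A\sum_{p\leq k_n}p^{-s}\Bigr),
\]
use the Chebyshev-type bound $\sum_{p\leq y}p^{-s}\lesssim y^{1-s}/((1-s)\log y)$, and optimize by setting $1-s=u/\log k_n$ with $u\sim 2\log\log\log n$. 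The exponent minus $\log n$ then becomes $-(\log\log n)^{2}(u-1)(1+o(1))$, again super-polynomial in $\log n$. Combining the two bounds gives that the logarithm of the absolute difference, divided by $\log\log n$, tends to $-\infty$, as required; the delicate step is the Rankin optimization, since the smoothness condition $P^{+}(m)\leq k_n$ is what converts the estimate from polynomial to super-polynomial decay.
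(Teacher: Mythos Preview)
Your argument is correct and follows a genuinely different route from the paper. The paper expands both moment generating functions as power series in $\theta$ and compares moment by moment: writing $S_n=\sum_{p\in B(n,C)}g(p)Z_p$ and $\tilde S_n=\sum_{p\in B(n,C)}g(p)Y_p$, it splits
\[
\bigl|\mathbb{E} e^{\theta S_n}-\mathbb{E} e^{\theta\tilde S_n}\bigr|\le\sum_{r\le K\log\log n}\frac{|\theta|^r}{r!}\bigl|\mathbb{E} S_n^r-\mathbb{E}\tilde S_n^r\bigr|+\text{(two tails in }r\text{)},
\]
uses the elementary bound $|\mathbb{E} S_n^r-\mathbb{E}\tilde S_n^r|\le (Ck_n)^r/n$, and then shows the first block is at most $e^{-\frac12\log n}$ while the tails are $\le C_2 e^{-\frac12(\log K)\log\log n}$, finishing by letting $K\to\infty$. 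Your decomposition instead exploits multiplicativity via $e^{\theta g(p)Z_p}=1+(e^{\theta g(p)}-1)Z_p$, turning the difference into a sum over $k_n$-smooth squarefree integers and treating the pieces $I_1,I_2$ with Rankin's method. What your approach buys is a transparent explanation of \emph{why} the cutoff $k_n=n^{1/(\log\log n)^2}$ works: it is exactly the smoothness condition that forces $\omega(m)>(\log\log n)^2$ when $m>n$ and that makes the Rankin exponent beat $\log n$. What the paper's approach buys is that it avoids any analytic number theory beyond $\sum_{p\le n}1/p\sim\log\log n$: no partial summation over primes, no Rankin optimisation, just Stirling and a crude count $|B(n,C)|\le k_n$. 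One small caveat: your stated bound $\sum_{p\le y}p^{-s}\lesssim y^{1-s}/((1-s)\log y)$ also carries an additive $O(\log\log y)$ term coming from small primes, but with your choice $u\sim 2\log\log\log n$ one has $e^u/u\asymp(\log\log n)^2/\log\log\log n\gg\log\log n$, so this does not affect the conclusion.
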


\begin{proof}
For any $\theta\in\mathbb{R}$, and any
$K$ (which will later be chosen sufficiently large in terms of $\theta$ and $C$),
\begin{align}
&\left|\mathbb{E}\left[e^{\theta\sum_{p\in B(n,C)}g(p)Z_{p}}\right]
-\mathbb{E}\left[e^{\theta\sum_{p\in B(n,C)}g(p)Y_{p}}\right]\right|\label{ThreeTerms}
\\
&\leq\sum_{r\leq K\log\log n}\frac{|\theta|^{r}}{r!}|\mathbb{E}[S_{n}^{r}]-\mathbb{E}[\tilde{S}_{n}^{r}]|
+\sum_{r>K\log\log n}\frac{|\theta|^{r}}{r!}\mathbb{E}[S_{n}^{r}]
+\sum_{r>K\log\log n}\frac{|\theta|^{r}}{r!}\mathbb{E}[\tilde{S}_{n}^{r}],
\nonumber
\end{align}
where
\begin{equation}
S_{n}:=\sum_{p\in B(n,C)}g(p)Z_{p},
\qquad
\tilde{S}_{n}:=\sum_{p\in B(n,C)}g(p)Y_{p}.
\end{equation}

We claim that $|\mathbb{E}[\tilde{S}_{n}^{r}]-\mathbb{E}[S_{n}^{r}]|\leq\frac{(Ck_{n})^{r}}{n}$.
To see this, notice that
\begin{equation}
\mathbb{E}[\tilde{S}_{n}^{r}]=\sum_{k=1}^{r}\sum_{r_{i}}\frac{r!}{r_{1}!\cdots r_{k}!}\frac{1}{k!}
\sum_{p_{j}}g(p_{1})^{r_{1}}\cdots g(p_{k})^{r_{k}}\mathbb{E}[Y_{p_{1}}^{r_{1}}\cdots Y_{p_{k}}^{r_{k}}].
\end{equation}
We observe that
\begin{equation}
\mathbb{E}[Y_{p_{1}}^{r_{1}}\cdots Y_{p_{k}}^{r_{k}}]=\mathbb{E}[Y_{p_{1}}\cdots Y_{p_{k}}]
=\frac{1}{p_{1}\cdots p_{k}},
\end{equation}
which differs from
\begin{equation}
\mathbb{E}[Z_{p_{1}}^{r_{1}}\cdots Z_{p_{k}}^{r_{k}}]=\mathbb{E}[Z_{p_{1}}\cdots Z_{p_{k}}]
=\frac{1}{n}\bigg\lfloor\frac{n}{p_{1}\cdots p_{k}}\bigg\rfloor,
\end{equation}
by at most $\frac{1}{n}$. Therefore,
\begin{align}
|\mathbb{E}[\tilde{S}_{n}^{r}]-\mathbb{E}[S_{n}^{r}]|
&\leq\sum_{k=1}^{r}\sum_{r_{i}}\frac{r!}{r_{1}!\cdots r_{k}!}\frac{1}{k!}\sum_{p_{j}}\frac{C^{r_{1}+\cdots+r_{k}}}{n}
\\
&\leq\frac{1}{n}\left(\sum_{p\in B(n,C)}C\right)^{r}
\leq\frac{(Ck_{n})^{r}}{n}.
\nonumber
\end{align}

Thus, we can bound the first term in \eqref{ThreeTerms} by
\begin{align}
&\sum_{r\leq K\log\log n}\frac{|\theta|^{r}}{r!}|\mathbb{E}[S_{n}^{r}]-\mathbb{E}[\tilde{S}_{n}^{r}]|
\\
&\leq\sum_{r\leq K\log\log n}\frac{|\theta|^{r}}{r!}\frac{(Ck_{n})^{r}}{n}
\nonumber
\\
&\leq C_{0}e^{-\log n}\sum_{r\leq K\log\log n}e^{r(\log|\theta|+\log C+\log k_{n}+1)-r\log r}
\nonumber
\\
&\leq C_{0}e^{-\log n}K\log\log n\cdot\max_{r\leq K\log\log n}e^{r(\log|\theta|+\log C+\log k_{n}+1)-r\log r}.
\nonumber
\end{align}
Let $F(r):=r(\log|\theta|+\log C+\log k_{n}+1)-r\log r$. Since $k_{n}=n^{\frac{1}{(\log\log n)^{2}}}$ and
$\frac{k_{n}}{r}\rightarrow\infty$ as $n\rightarrow\infty$, it is straightforward to compute that
\begin{equation}
F'(r)=(\log|\theta|+\log C+\log k_{n}+1)-\log r-1>0,
\end{equation}
for any $r\leq K\log\log n$ and it is true for $n$ sufficiently large in terms of $C,K,\theta$.
Therefore the maximum of $F(r)$, $r\leq K\log\log n$ is achieved at $K\log\log n$ and, hence,
\begin{align}
\sum_{r\leq K\log\log n}\frac{|\theta|^{r}}{r!}|\mathbb{E}[S_{n}^{r}]-\mathbb{E}[\tilde{S}_{n}^{r}]|
&\leq
C_{1}\log\log n\cdot e^{-\log n}\cdot e^{K\log\log n\frac{\log n}{(\log\log n)^{2}}}
\\
&\leq e^{-\frac{1}{2}\log n},\nonumber
\end{align}
for sufficiently large $n$.

The second term in \eqref{ThreeTerms} is bounded above by the third term.
\begin{equation}
\sum_{r>K\log\log n}\frac{|\theta|^{r}}{r!}\mathbb{E}[S_{n}^{r}]
\leq\sum_{r>K\log\log n}\frac{|\theta|^{r}}{r!}\mathbb{E}[\tilde{S}_{n}^{r}].
\end{equation}
To bound the third term in \eqref{ThreeTerms}, first observe that
\begin{equation}
\mathbb{E}[\tilde{S}_{n}^{r}]=\sum_{p_{1},p_{2},\ldots,p_{\ell}\leq k_{n}}g(p_{1})\cdots g(p_{\ell})\mathbb{E}[Y_{p_{1}}\cdots Y_{p_{\ell}}],
\end{equation}
where the sums are over primes $p_{1},\ldots, p_{\ell}\leq k_{n}$ that may not be distinct.
Notice that $k_{n}\leq n$ and for distinct $p_{1},\ldots,p_{\ell}$, we have
\begin{equation}
\mathbb{E}[Y_{p_{1}}\cdots Y_{p_{\ell}}]=\frac{1}{p_{1}\cdots p_{\ell}}.
\end{equation}
Therefore, it is not difficult to see that
\begin{align}
\mathbb{E}[\tilde{S}_{n}^{r}]
&\leq C^{r}\left[\left(\sum_{p\leq n}\frac{1}{p}\right)+\left(\sum_{p\leq n}\frac{1}{p}\right)^{2}
+\cdots+\left(\sum_{p\leq n}\frac{1}{p}\right)^{r}\right]
\\
&\leq rC^{r}\left(\sum_{p\leq n}\frac{1}{p}\right)^{r}
\nonumber.
\end{align}
For $r>K\log\log n$,
\begin{align}
\frac{|\theta|^{r}}{r!}\mathbb{E}[\tilde{S}_{n}^{r}]
&\leq C_{2} e^{r\log|\theta|+r\log C-r\log r+r+\log r+(\log\log\log n)r}
\\
&\leq C_{2} e^{(\log|\theta|+\log C+1-\log K)r+\log r}
\nonumber
\\
&\leq C_{2} e^{-\frac{1}{2}(\log K)r},
\nonumber
\end{align}
for sufficiently large $K$, where $C_{2}$ is a positive constant. Hence,
the third term in \eqref{ThreeTerms} is bounded above by
\begin{equation}
\sum_{r>K\log\log n}e^{-\frac{1}{2}(\log K)r}\leq C_{3}e^{-\frac{1}{2}\log K\log\log n},
\end{equation}
where $C_{3}$ is a positive constant.
The proof is completed by letting $K\rightarrow\infty$.
\end{proof}

Finally, we are ready to prove Theorem \ref{LDPThm}.

\begin{proof}[Proof of Theorem \ref{LDPThm}]
Recall that $B(n,c)=\{p:|g(p)|\leq C, p\leq k_{n}\}$.
For any $\theta\in\mathbb{R}$, we have
\begin{align}
\log\mathbb{E}\left[e^{\theta\sum_{p\in B(n,C)}g(p)Y_{p}}\right]
&=\log\prod_{p\in B(n,C)}\mathbb{E}\left[e^{\theta g(p)Y_{p}}\right]
\\
&=\sum_{p\in B(n,C)}\log\left(\frac{1}{p}e^{\theta g(p)}+1-\frac{1}{p}\right).\nonumber
\end{align}
For sufficiently large $p$, 
\begin{equation}
\log\left(\frac{1}{p}e^{\theta g(p)}+1-\frac{1}{p}\right)
+(e^{\theta g(p)}-1)\frac{1}{p}=O\left(\frac{1}{p^{2}}\right),
\end{equation}
where $O(\frac{1}{p^{2}})$ depends on $C$ and $\theta$,
and it is well known that $\frac{1}{\log\log n}\sum_{p\leq n}\frac{1}{p}\rightarrow 1$ as $n\rightarrow\infty$,
and we also have
\begin{equation}
\lim_{n\rightarrow\infty}\frac{\log\log k_{n}}{\log\log n}=\lim_{n\rightarrow\infty}\frac{\log\log n-2\log\log\log n}{\log\log n}=1.
\end{equation}
Therefore, by Assumption \ref{MainAssumption} and definition of $\rho_{n}$ and $\rho$ 
\begin{equation}
\lim_{n\rightarrow\infty}\frac{1}{\log\log n}\log\mathbb{E}\left[e^{\theta\sum_{p\in B(n,C)}g(p)Y_{p}}\right]
=\int_{-C}^{C}(e^{\theta y}-1)\rho(dy),
\end{equation}
if $\rho(\{-C\})=\rho(\{C\})=0$ since convergence of moment generating functions implies
weak convergence. Here we can assume that $\rho(\{-C\})=\rho(\{C\})=0$ since there are at most countably many atoms for $\rho$, we can assume
that we choose a sequence of $C$ that goes to infinity so that $-C$ and $C$ are not atoms of $\rho$.

By lemma \ref{ZYLemma}, we have
\begin{align}
\lim_{n\rightarrow\infty}\frac{1}{\log\log n}\log\mathbb{E}\left[e^{\theta\sum_{p\in B(n,C)}g(p)Z_{p}}\right]
&=\lim_{n\rightarrow\infty}\frac{1}{\log\log n}\log\mathbb{E}\left[e^{\theta\sum_{p\in B(n,C)}g(p)Y_{p}}\right]
\\
&=\int_{-C}^{C}(e^{\theta y}-1)\rho(dy).\nonumber
\end{align}

By G\"{a}rtner-Ellis theorem, see e.g. Dembo and Zeitouni \cite{Dembo},
$\mathbb{P}(\frac{\sum_{p\in B(n,C)}g(p)Z_{p}}{\log\log n}\in\cdot)$ satisfies a large deviation principle
with the rate function 
\begin{equation}
I_{C}(x)=\sup_{\theta\in\mathbb{R}}
\left\{\theta x-\int_{-C}^{C}(e^{\theta y}-1)\rho(dy)\right\}.
\end{equation}

By the approximation estimates developed in Lemma \ref{LargeC} and Lemma \ref{LargePrimes}, 
the truncated errors are superexponentially small. Hence,
$\mathbb{P}(\frac{X(n)}{\log\log n}\in\cdot)$ satisfies a large deviation principle with the rate function 
\begin{equation}
I(x)=\lim_{C\rightarrow\infty}I_{C}(x)
=\sup_{\theta\in\mathbb{R}}
\left\{\theta x-\int_{-\infty}^{\infty}(e^{\theta y}-1)\rho(dy)\right\}.
\end{equation}
\end{proof}

\subsection{Proof of Moderate Deviation Principle}

We conclude this section by giving a proof of the moderate deviation principle.

\begin{proof}[Proof of Theorem \ref{MDPThm}]
Recall that $Y_{p}$ are independent Bernoulli random variables with parameter $\frac{1}{p}$ 
and $Z_{p}=1$ if $V(n)$ is divisible by $p$ and $Z_{p}=0$ otherwise, where $V(n)$ is
an integer uniformly distributed on $\{1,2,\ldots,n\}$.

Let 
\begin{equation}
k_{n}:=n^{\frac{a_{n}}{\sigma_{n}^{2}}}.
\end{equation}
Similar to the proof of Lemma \ref{LargeC}, we can show that, for any $\epsilon>0$,
\begin{equation}
\limsup_{C\rightarrow\infty}\limsup_{n\rightarrow\infty}
\frac{\sigma_{n}^{2}}{a_{n}^{2}}\log
\mathbb{P}\left(\left|\sum_{|g(p)|>C, p\leq n}g(p)Z_{p}\right|\geq\epsilon a_{n}\right)=-\infty,
\end{equation}
and also similar to the proof of Lemma \ref{LargePrimes}, we can show that, for any $\epsilon>0$,
\begin{equation}
\limsup_{n\rightarrow\infty}\frac{\sigma_{n}^{2}}{a_{n}^{2}}\log
\mathbb{P}\left(\sum_{k_{n}<p\leq n,|g(p)|\leq C}g(p)Z_{p}\geq\epsilon a_{n}\right)=-\infty,
\end{equation}
where we used the facts that $\frac{a_{n}}{\sigma_{n}^{2}}\rightarrow 0$
and $\frac{\sigma_{n}}{a_{n}}\rightarrow 0$.

Let us define
\begin{equation}
S_{n}=\sum_{p\in B(n,C)}g(p)Z_{p},
\quad\text{and}\quad
\tilde{S}_{n}=\sum_{p\in B(n,C)}g(p)Y_{p},
\end{equation}
where we recall that $B(n,C)=\{p:p\leq k_{n},|g(p)|\leq C\}$.

Let 
\begin{equation}
\mu_{n}^{C}:=\mathbb{E}[\tilde{S}_{n}]=\sum_{p\in B(n,C)}\frac{g(p)}{p},
\end{equation}
and recall that
\begin{equation}
\sigma_{n}^{2}:=\sum_{ p\leq n}\frac{g(p)^{2}}{p}.
\end{equation}

Following the proof of Lemma \ref{ZYLemma}, for any $\theta\in\mathbb{R}$, we can also prove that
\begin{equation}
\lim_{n\rightarrow\infty}\frac{\sigma_{n}^{2}}{a_{n}^{2}}\log\left|
\mathbb{E}\left[e^{\frac{\theta a_{n}}{\sigma_{n}^{2}}(S_{n}-\mu_{n}^{C})}\right]
-\mathbb{E}\left[e^{\frac{\theta a_{n}}{\sigma_{n}^{2}}(\tilde{S}_{n}-\mu_{n}^{C})}\right]
\right|=-\infty.
\end{equation}

Finally, for any $\theta\in\mathbb{R}$,
\begin{align}
&\frac{\sigma_{n}^{2}}{a_{n}^{2}}\log\mathbb{E}\left[e^{\frac{\theta a_{n}}{\sigma_{n}^{2}}(\tilde{S}_{n}-\mu_{n}^{C})}\right]
\\
&=-\frac{\theta}{a_{n}}\mu_{n}^{C}
+\frac{\sigma_{n}^{2}}{a_{n}^{2}}\sum_{p\in B(n,C)}
\log\left[\left(e^{\frac{\theta a_{n}}{\sigma_{n}^{2}}g(p)}-1\right)\frac{1}{p}+1\right]
\nonumber
\\
&=
-\frac{\theta}{a_{n}}\mu_{n}^{C}
+\frac{\sigma_{n}^{2}}{a_{n}^{2}}\sum_{p\in B(n,C)}
\left(e^{\frac{\theta a_{n}}{\sigma_{n}^{2}}g(p)}-1\right)\frac{1}{p}+\frac{\sigma_{n}^{2}}{a_{n}^{2}}O\left(\frac{a_{n}}{\sigma_{n}^{2}}\right)
\nonumber
\\
&=
-\frac{\theta}{a_{n}}\mu_{n}^{C}
+\frac{\sigma_{n}^{2}}{a_{n}^{2}}\sum_{p\in B(n,C)}
\left(\frac{\theta a_{n}}{\sigma_{n}^{2}}g(p)+\frac{1}{2}\frac{\theta^{2} a_{n}^{2}}{\sigma_{n}^{4}}g(p)^{2}+
g(p)^{2}O\left(\left(\frac{a_{n}}{\sigma_{n}^{2}}\right)^{3}\right)\right)
\frac{1}{p}+O\left(\frac{1}{a_{n}}\right)
\nonumber
\\
&=\frac{\theta^{2}}{2}\frac{1}{\sigma_{n}^{2}}\sum_{p\in B(n,C)}\frac{g(p)^{2}}{p}+o(1),
\nonumber
\end{align}
where $C,\theta$ are being viewed as fixed while $n\rightarrow\infty$.
Therefore, by  Assumption \ref{MainAssumption},
\begin{equation}
\lim_{n\rightarrow\infty}\frac{\sigma_{n}^{2}}{a_{n}^{2}}\log\mathbb{E}\left[e^{\frac{\theta a_{n}}{\sigma_{n}^{2}}(\tilde{S}_{n}-\mu_{n}^{C})}\right]
=\frac{\theta^{2}}{2}\frac{\int_{-C}^{C}y^{2}\rho(dy)}{\int_{-\infty}^{\infty}y^{2}\rho(dy)}.
\end{equation}
By letting $C\rightarrow\infty$ and applying
G\"{a}rtner-Ellis theorem, we complete the proof.
\end{proof}

\section*{Acknowledgements}

The authors are very grateful to Professor S. R. S. Varadhan for helpful discussions and generous suggestions. 
The authors are extremely grateful to an anonymous referee for carefully reading the paper, and for the suggestions
and comments that have greatly improved the quality of the manuscript.

\end{document}